\documentclass{birkmult}
\usepackage{amssymb}
\usepackage{amsmath}

\usepackage{diagrams}
\diagramstyle[centredisplay,dpi=600,UglyObsolete,heads=LaTeX]

\newcommand\Zset{\mathbb {Z}}
\newcommand\f{{\mathcal F}}
\newcommand\te{{\mathcal T}}
\newcommand\ef{{\mathfrak F}}
\newcommand\dirlim{\mathop{\varinjlim}\limits}
\newcommand\homo{\mathrm{Hom}}
\newcommand\ann{{\mathrm{ann}}}

\newtheorem{thm}{Theorem}[section]
\newtheorem{cor}[thm]{Corollary}

\newtheorem{prop}[thm]{Proposition}
\theoremstyle{definition}
\newtheorem{defn}[thm]{Definition}
\theoremstyle{remark}

\newtheorem*{ex}{Example}
\numberwithin{equation}{section}

\begin{document}

\title{A note on $(\alpha, \beta)$-higher derivations and their extensions to modules of quotients}

\author{Lia Va\v s}

\address{Department of Mathematics, Physics and Statistics\\ University of the Sciences
in Philadelphia\\ Philadelphia, PA 19104, USA}

\email{l.vas@usp.edu}

\author{Charalampos Papachristou}
\address{Department of Mathematics, Physics and Statistics\\ University of the Sciences
in Philadelphia\\ Philadelphia, PA 19104, USA}
\email{c.papach@usp.edu}

\subjclass{
16S90, 
16W25, 
16N80} 

\keywords{derivation, higher derivation, ring of quotients, module of quotients, torsion theory}

\date{April 29, 2009}

\begin{abstract}
We extend some recent results on the differentiability of torsion theories. In particular, we generalize the concept of $(\alpha, \beta)$-derivation to $(\alpha, \beta)$-higher derivation and demonstrate that a filter of a hereditary torsion theory that is invariant for $\alpha$ and $\beta$ is $(\alpha, \beta)$-higher derivation invariant. As a consequence, any higher derivation can be extended from a module to its module of quotients. Then, we show that any higher derivation extended to a module of quotients extends also to a module of quotients with respect to a larger torsion theory in such a way that these extensions agree. We also demonstrate these results hold for symmetric filters as well. We finish the paper with answers to two questions posed in  [L. Va\v s, Extending higher derivations to rings and modules of quotients, International Journal of Algebra, 2 (15) (2008), 711--731].
In particular, we present an example of a non-hereditary torsion theory that is not differential.
\end{abstract}

\maketitle

\section{Preliminaries and summary of results}
\label{section_preliminaries}

Recall that a {\em derivation} on a ring $R$ is an additive mapping $\delta: R \rightarrow R$ such that $\delta(rs)=\delta(r)s+r\delta(s)$ for all $r,s\in R.$ An additive mapping $d: M\rightarrow M$ on a right $R$-module $M$ is a {\em $\delta$-derivation} if $d(xr)=d(x)r+x\delta(r)$ for all $x\in M$ and $r\in R.$  If $\alpha$ and $\beta$ are ring automorphisms, the derivation concept generalizes to $(\alpha, \beta)$-derivation by requiring that $\delta(rs)=\delta(r)\alpha(s)+\beta(r)\delta(s)$ for all $r,s\in R.$

A {\em torsion theory} for $R$ is a pair $\tau = (\te, \f)$ of
classes of $R$-modules such that $\te$ and $\f$ are maximal classes having the property that
Hom$_R(T,F)=0,$ for all $T \in \te$ and $F \in \f.$
The modules in $\te$ are {\em torsion modules} and the modules in $\f$ are
{\em torsion-free modules}. For a torsion theory $\tau=(\te, \f)$, $\te(M)$ denotes the
largest torsion submodule of a right $R$-module $M$ and $\f(M)$ denotes the quotient $M/\te(M).$ $\tau = (\te, \f)$ is {\em hereditary} if $\te$ is closed under taking submodules (equivalently $\f$ is closed under formation of injective envelopes). If $\te(R)=0,$ $\tau$ is said to be faithful.

If $M$ is a right $R$-module with submodule $N$ and $m\in M,$ denote $\{r\in R\; | \; mr\in N\}$ by $(m:N).$ Then $(m:0)$ is the annihilator $\ann(m).$ A {\em Gabriel filter} $\ef$ on a ring $R$ is a nonempty collection of right ideals such that
\begin{enumerate}
\item If $I\in \ef$ and $r\in R,$ then $(r:I)\in \ef.$

\item If $I\in \ef$ and $J$ is a right ideal with $(r:J)\in
\ef$ for all $r\in I,$ then $J\in \ef$.
\end{enumerate}

If $\tau=(\te, \f)$ is a hereditary torsion theory, the collection of right
ideals $\{ I\; |\; R/I\in \te\;\}$ is a Gabriel filter. Conversely, if $\ef$ is a Gabriel
filter, then the class of modules $\{ M\; |\; \ann(m)\in \ef$ for every $m\in M\}$ is a torsion class of a
hereditary torsion theory.

If $\delta$ is any additive map on a ring $R$, a Gabriel filter $\ef$ is said to be {\em $\delta$-invariant} if for every $I\in \ef$ there is $J\in \ef$ such that $\delta(J)\subseteq I.$ If $\ef$ is $\delta$-invariant for all derivations $\delta,$ it is said to be a {\em differential filter}. The hereditary torsion theory determined by $\ef$ is said to be {\em differential} in this case. By Lemma 1.5 from \cite{Bland_paper}, $\ef$ is $\delta$-invariant iff $d(\te(M))\subseteq \te(M)$ for every right $R$-module $M$ and every $\delta$-derivation $d$ on $M.$ In \cite{Lia_Diff}, it is shown that Lambek, Goldie and any perfect hereditary torsion theories are differential. Lomp and van den Berg extend these results in \cite{Lomp_Berg} by showing that every Gabriel filter that is $\alpha$ and $\beta$-invariant is also $\delta$-invariant for any $(\alpha, \beta)$-derivation $\delta$ (Theorem 2, \cite{Lomp_Berg}). As a direct consequence, {\em every} hereditary torsion theory is differential (Corollary 3, \cite{Lomp_Berg}). This answers a question from \cite{Lia_Diff}.

If $\tau$ is a hereditary torsion theory with Gabriel filter $\ef$ and $M$ is a right $R$-module, the module of quotients $M_{\ef}$ of $M$ is defined as the largest submodule $N$ of the injective envelope $E(M/\te(M))$ of $M/\te(M)$ such that $N/(M/\te(M))$ is torsion module (i.e. the closure of $M/\te(M)$ in $E(M/\te(M))$). The $R$-module $R_{\ef}$ has a ring structure and $M_{\ef}$ has a structure of a right $R_{\ef}$-module (see exposition on pages 195--197 in \cite{Stenstrom}). The ring $R_{\ef}$ is called the right ring of quotients with respect to the torsion theory $\tau.$

Consider the map $q_M:M\rightarrow M_{\ef}$ obtained by composing the projection $M\rightarrow M/\te(M)$ with the injection $M/\te(M)\rightarrow M_{\ef}.$  This defines a left exact functor $q$ from the category of right $R$-modules to the category of right $R_{\ef}$-modules (see \cite{Stenstrom} pages 197--199).

In Theorem on page 277 and Corollary 1 on page 279 of \cite{Golan_paper}, Golan has shown that if $\ef$ is differential, then any $\delta$-derivation $d$ on any module $M$ extends to a derivation on the module of quotients $M_{\ef}$ such that $d q_M=q_M d.$
Bland proved that such extension is unique and that the converse is also true (Propositions 2.1 and 2.3 in \cite{Bland_paper}).
Thus a filter $\ef$ is differential iff every derivation on any module $M$ extends uniquely to a derivation on the module of quotients $M_{\ef}.$

The paper is organized as follows. In Section \ref{section_alpha_beta}, we generalize the concept of $(\alpha, \beta)$-derivation to $(\alpha, \beta)$-higher derivation (Definition \ref{definition_of_alpha-beta}).
In Section \ref{section_HD-invariant}, we show that every filter $\ef$ that is $\alpha$ and $\beta$-invariant is also $\Delta$-invariant for any $(\alpha, \beta)$-higher derivation $\Delta$ (Proposition \ref{alpha-beta-invariant}). As a consequence, we obtain that every Gabriel filter is higher differential (Corollary \ref{any_filter_is_HD}) and that every higher derivation on a module extends to its module of quotients (Corollary \ref{any_HD_extends}). In Section \ref{section_extending}, we show that the assumptions for some results from \cite{Lia_HD} and \cite{Lia_Extending} can be relaxed and that these results hold for {\em every} two filters $\ef_1$ and $\ef_2$ such that $\ef_1\subseteq\ef_2$ (Corollary \ref{agreement_HD}). In Section \ref{section_symmetric}, we show that the results from previous sections hold for symmetric filters as well (Corollary \ref{any_symmetric_filter_is_HD}). Lastly, in Section \ref{section_example}, we present an example of a torsion theory that is not differential (Example \ref{example_not_diff}) thus answering a question from \cite{Lia_HD}. Using result from Section \ref{section_HD-invariant}, we also show that there cannot exist a hereditary torsion theory that is differential but not higher differential.

\section{$(\alpha,\beta)$-higher derivations}
\label{section_alpha_beta}

Recall that a {\em higher derivation (HD)} on $R$ is an indexed family $\{\delta_n\}_{n\in\omega}$ of additive maps $\delta_n$ such that $\delta_0$ is the identity mapping on $R$ and
\[\delta_n(rs)=\sum_{i=0}^n \delta_i(r)\delta_{n-i}(s)\]
for all $n.$ For example, if $\delta$ is a derivation, the family $\{\frac{\delta^n}{n!}\}$ is a higher derivation.

Let $\alpha$ and $\beta$ be ring automorphisms. Throughout this, and most of the next section, we assume that $R$ is a ring in which $n1_R$ is invertible for every positive integer $n.$ In case that $\alpha$ and $\beta$ are both identities, we can drop this additional assumption on $R.$ We generalize the concept of an $(\alpha,\beta)$-derivation to higher derivations as follows.

\begin{defn}  An {\em $(\alpha,\beta)$-higher derivation ($(\alpha,\beta)$-HD)} on $R$ is an indexed family $\Delta=\{\delta_n\}_{n\in\omega}$ of additive maps $\delta_n$ such that $\delta_0$ is the identity mapping on $R$ and
\[\delta_n(rs)=\delta_n(r)\alpha^n(s)+\sum_{i=1}^n\frac{i!(n-i)!}{n!}\sum_{k_0+\ldots+k_i=n-i} \delta_{k_0} \prod_{j=1}^i \beta\delta_{k_j}(r)\;\alpha^{k_0} \prod_{j=1}^i \delta_1\alpha^{k_j}(s)\]
where any composition of the form $\delta_1^j\delta_1^k$ in the second product is substituted by $\delta_{j+k}.$ Also, in case that $\beta$ is the identity, $\delta_j\delta_k$ in the first product is substituted by $\delta_{j+k}.$
\label{definition_of_alpha-beta}
\end{defn}

For $n=1$ this formula yields the familiar $\delta_1(rs)=\delta_1(r)\alpha(s)+\beta(r)\delta_1(s).$ For $n=2$ we obtain that $\delta_2(rs)=\delta_2(r)\alpha^2(s)+\frac{1}{2}\beta\delta_1(r)\delta_1\alpha(s)+\frac{1}{2}\delta_1\beta(r)\alpha\delta_1(s)+\beta^2(r)\delta_2(s).$

Note that the elements of the form $(n1_R)^{-1}$ are in the center of $R$ for any positive integer $n$ since elements of the form $n1_R$ are in the center of $R.$ So, the coefficients $\frac{i!(n-i)!}{n!}$ commute with all ring elements.

If $\alpha$ is an identity, we obtain
\[\delta_n(rs)=\delta_n(r)s+\sum_{i=1}^n\frac{i!(n-i)!}{n!}\sum_{k_0+\ldots+k_i=n-i} \delta_{k_0} \prod_{j=1}^i \beta\delta_{k_j}(r)\; \delta_i(s)\]

If $\beta$ is an identity, we obtain
\[\delta_n(rs)=\delta_n(r)\alpha^n(s)+\sum_{i=1}^n\frac{i!(n-i)!}{n!}\sum_{k_0+\ldots+k_i=n-i} \delta_{n-i}(r)\;\alpha^{k_0} \prod_{j=1}^i \delta_1\alpha^{k_j}(s)\]

In particular, if both $\alpha$ and $\beta$ are identities, we obtain
\[\delta_n(rs)=\delta_n(r)s+\sum_{i=1}^n\frac{i!(n-i)!}{n!}\sum_{k_0+\ldots+k_i=n-i} \delta_{n-i}(r)\;\delta_{i}(s)\]
\[=\delta_n(r)s+\sum_{i=1}^n\frac{i!(n-i)!}{n!}\;\delta_{n-i}(r)\;\delta_{i}(s)\sum_{k_0+\ldots+k_i=n-i} 1_R=\]
\[=\delta_n(r)s+\sum_{i=1}^n\frac{i!(n-i)!}{n!}\delta_{n-i}(r)\;\delta_{i}(s)\frac{n!}{i!(n-i)!}=\]
\[\delta_n(r)s+\sum_{i=1}^n\frac{i!(n-i)!}{n!}\frac{n!}{i!(n-i)!}\delta_{n-i}(r)\;\delta_{i}(s)\;=\;\sum_{i=0}^n \delta_{n-i}(r)\delta_{i}(s).\]
The last formula in the chain above is exactly the one that defines a higher derivation.

\section{Higher differentiation invariance}
\label{section_HD-invariant}

If $\Delta=\{\delta_n\}$ is an $(\alpha,\beta)$-HD, a Gabriel filter $\ef$ is  {\em$\Delta$-invariant} if for every $I\in \ef$ and every $n,$ there is $J\in \ef$ such that $\delta_i(J)\subseteq I$ for all $i\leq n$ (equivalently, for every $I\in \ef$ and every $n,$ there is $J\in \ef$ such that $\delta_n(J)\subseteq I$). If a filter $\ef$ is $\Delta$-invariant for every $(\alpha,\beta)$-HD $\Delta,$ $\ef$ is said to be {\em higher differential (HD)}. The hereditary torsion theory determined by $\ef$ is said to be {\em higher differential} in this case.

\begin{prop}
Let $\Delta$ be a higher $(\alpha, \beta)$-derivation. Then any Gabriel filter $\ef$ that is $\alpha$ and $\beta$-invariant is $\Delta$-invariant.
\label{alpha-beta-invariant}
\end{prop}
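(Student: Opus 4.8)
The plan is to reduce the statement about $(\alpha,\beta)$-higher derivations to the already-known case of ordinary $(\alpha,\beta)$-derivations, which is Theorem 2 of \cite{Lomp_Berg}, together with an induction on $n$. Recall that $\Delta$-invariance for an $(\alpha,\beta)$-HD $\Delta=\{\delta_n\}$ means that for every $I\in\ef$ and every $n$ there is $J\in\ef$ with $\delta_i(J)\subseteq I$ for all $i\le n$; since $\delta_0=\mathrm{id}$, the case $n=0$ is trivial, and the $n=1$ case says exactly that $\ef$ is $\delta_1$-invariant. Now $\delta_1$ satisfies $\delta_1(rs)=\delta_1(r)\alpha(s)+\beta(r)\delta_1(s)$, i.e. $\delta_1$ is an $(\alpha,\beta)$-derivation; so by Theorem 2 of \cite{Lomp_Berg} the hypothesis that $\ef$ is $\alpha$- and $\beta$-invariant already forces $\ef$ to be $\delta_1$-invariant.

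For the inductive step I would assume that for a fixed $n$ and every $I\in\ef$ there is $J\in\ef$ with $\delta_i(J)\subseteq I$ for all $i\le n$, and try to produce, for a given $I\in\ef$, some $K\in\ef$ with $\delta_{n+1}(K)\subseteq I$ (this suffices by the ``equivalently'' clause in the definition of $\Delta$-invariance). The defining identity of Definition \ref{definition_of_alpha-beta} expresses $\delta_{n+1}(rs)$ as $\delta_{n+1}(r)\alpha^{n+1}(s)$ plus a sum of terms each of which is a product of two factors: a factor obtained from $r$ by applying compositions of maps among $\{\beta\delta_k : k\le n\}$ (together with $\delta_{k_0}$, $k_0\le n$), and a factor obtained from $s$ by applying compositions of maps among $\{\delta_1\alpha^{k} : k\le n\}$ and $\alpha^{k_0}$ — crucially, in each of the ``mixed'' terms the index attached to $r$ plus the index attached to $s$ is at most $n$, so each such term involves only $\delta_i$ with $i\le n$ applied to $r$ (and to $s$), never $\delta_{n+1}$ itself except in the leading term. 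Concretely: given $I\in\ef$, first use $\alpha^{n+1}$-invariance (a consequence of $\alpha$-invariance, obtained by iterating) to find $I'\in\ef$ with $\alpha^{n+1}(I')\subseteq I$; then shrink $I'$ repeatedly using the inductive hypothesis for level $n$, the $\beta$- and $\alpha$-invariance of $\ef$, and the $\delta_1$-invariance established above, to get $K\in\ef$ on which \emph{every} composition of maps appearing in the formula (applied to an element of $K$, or to the generators of the ideal it lands in) ends up inside $I'$ and hence, after multiplying by the $\alpha^{n+1}$-image of the other variable, inside $I$. The key bookkeeping point is that the formula for $\delta_{n+1}(rs)$ with $rs$ replaced by a typical element of $K$ (writing it as a sum of products) never calls $\delta_{n+1}$ on anything we have not already controlled.

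The main obstacle — and the only place real care is needed — is the combinatorial/ideal-theoretic bookkeeping: a Gabriel filter is closed under finite intersection and under the two Gabriel axioms, but the composite maps appearing in the formula are not themselves derivations, so I cannot invoke $\delta_1$-invariance directly on, say, $\beta\delta_k$. Instead I would handle each elementary building block separately: $\beta$-invariance gives, for $I'\in\ef$, some $J\in\ef$ with $\beta(J)\subseteq I'$; $\alpha$-invariance similarly; and $\delta_k$-invariance for $k\le n$ comes from the inductive hypothesis. Composing these — i.e., peeling the maps off one at a time from the outside in, at each stage replacing the current ideal by a smaller one in $\ef$ whose image under the next map lies in the previous ideal — produces for any fixed finite composition $\varphi$ of such maps an ideal $J_\varphi\in\ef$ with $\varphi(J_\varphi)\subseteq I'$. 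Taking $K$ to be the intersection of $I'$ (to control the leading term via $\alpha^{n+1}$), of the finitely many $J_\varphi$ over all $\varphi$ occurring in the formula, and applying one more round of the Gabriel axioms to absorb the products, yields $K\in\ef$ with $\delta_{n+1}(K)\subseteq I$. Since there are only finitely many summands and finitely many factors for each fixed $n+1$, all intersections are finite and stay in $\ef$; the fact that the central elements $\tfrac{i!(n-i)!}{n!}$ commute with everything (noted right after Definition \ref{definition_of_alpha-beta}) ensures the coefficients cause no trouble. This completes the induction and hence the proof.
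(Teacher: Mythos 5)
Your base case (reducing $\delta_1$ to Theorem 2 of \cite{Lomp_Berg}) is fine, and your observation that every ``mixed'' term in the formula for $\delta_{n+1}(rs)$ applies only $\delta_i$ with $i\le n$ to $r$ is correct and is indeed why an induction works. The genuine gap is at the point you dispose of with ``applying one more round of the Gabriel axioms to absorb the products.'' Intersecting finitely many ideals $J_\varphi$ controls the images of the lower-order composite maps, but it gives no handle on $\delta_{n+1}$ of a \emph{general} element of the intersection: such an element is not a product $rs$ with both factors controlled, so the Leibniz-type formula says nothing about it, and in the one place where the formula does apply (to an actual product $rs$) the leading term $\delta_{n+1}(r)\alpha^{n+1}(s)$ still involves the uncontrolled quantity $\delta_{n+1}(r)$. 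No finite intersection of members of $\ef$ can, by itself, produce an ideal whose $\delta_{n+1}$-image lies in $I$.

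The missing device, which is the entire content of the paper's inductive step, is to \emph{define} $J=\{r\in K \mid \delta_{n+1}(r)\in I\}$, where $K$ is your intersection further cut down by an ideal $J_\alpha\in\ef$ with $\alpha^{n+1}(J_\alpha)\subseteq I$, and then to prove $J\in\ef$ via the second Gabriel axiom: it suffices that $(r:J)\in\ef$ for every $r\in K$, and this follows from the containment $(\alpha^{-(n+1)}\delta_{n+1}(r):K)\subseteq(r:J)$ together with the first Gabriel axiom. Verifying that containment is where the Leibniz formula is actually used: for $s$ in the left-hand side one gets $\delta_{n+1}(r)\alpha^{n+1}(s)\in\alpha^{n+1}(K)\subseteq I$, which is how the leading term is tamed, while the remaining terms lie in $I$ simply because the $r$-factors are already in $I$ and $I$ is a right ideal -- so the $s$-factors $\alpha^{k_0}\prod\delta_1\alpha^{k_j}(s)$ need no control at all, and your appeal to $\delta_1$-invariance for them is unnecessary. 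Without this relative-ideal construction your argument does not close.
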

\begin{proof}
Let $I\in \ef.$ We shall use induction to show that for every $n,$ there is $J\in \ef$ such that $\delta_n(J)\subseteq I.$

For $n=0$ the claim trivially holds for $J=I.$ Assume that the claim holds for all $i<n.$ By induction hypothesis for $I$ there are right ideals $J_i\in \ef$ with $\sum_{k_0+\ldots+k_i=n-i} \delta_{k_0} \prod_{j=1}^i \beta\delta_{k_j}(J_i)\subseteq I$ for all $0<i\leq n.$ Note that for $i=n,$ $\sum_{k_0+\ldots+k_i=n-i} \delta_{k_0} \prod_{j=1}^i \beta\delta_{k_j}$ is $\beta^n.$ Since $\ef$ is $\beta$-invariant, there is $J_n\in \ef$ such that $\beta^n(J_n)\subseteq I.$

Take $J_0=I,$ and let $J_{\alpha}$ be a right ideal in $\ef$ with $\alpha^n(J_{\alpha})\subseteq I.$ Let $K=\bigcap_{i\leq n} J_i\cap J_{\alpha}.$ Then $K$ is in $\ef$ and $K\subseteq I.$ Define $J=\{r\in K| \delta_n(r)\in I\}.$ Then $J$ is a right ideal of $R,$ $J\subseteq K\subseteq I$ and $\delta_n(J)\subseteq I.$ Also, \[\sum_{k_0+\ldots+k_i=n-i} \delta_{k_0} \prod_{j=1}^i \beta\delta_{k_j}(J)\subseteq \sum_{k_0+\ldots+k_i=n-i} \delta_{k_0} \prod_{j=1}^i \beta\delta_{k_j}(J_i)\subseteq I\] for all $0<i\leq n.$ In order to prove that $J$ is in $\ef,$ it is sufficient to show that $(r:J)\in \ef$ for all $r\in K.$ To show that, we shall show that  $(\alpha^{-n}\delta_n(r):K)\subseteq (r:J).$ Since $K\in \ef,$  $(\alpha^{-n}\delta_n(r):K)\in \ef,$ and so this will be sufficient for $(r:J)\in\ef.$

Let $s\in(\alpha^{-n}\delta_n(r):K).$ Then $\delta_n(r)\alpha^n(s)\in \alpha^n(K)\subseteq \alpha^n(J_{\alpha})\subseteq I.$  The terms
$\sum_{k_0+\ldots+k_i=n-i}\delta_{k_0} \prod_{j=1}^i \beta\delta_{k_j}(r)$ are in $I$ for every $i=1,\ldots, n$ by construction.
Since fractions $\frac{i!(n-i)!}{n!}$ are in the center of $R,$ we obtain that every term on the right side of the formula below is in $I$ as well.
\[\delta_n(rs)=\delta_n(r)\alpha^n(s)+ \sum_{i=1}^n\frac{i!(n-i)!}{n!}\sum_{k_0+\ldots+k_i=n-i} \delta_{k_0} \prod_{j=1}^i \beta\delta_{k_j}(r)\;\alpha^{k_0} \prod_{j=1}^i \delta_1\alpha^{k_j}(s).\]
Thus $\delta_n(rs)\in I.$ Since $rs\in K,$ we have that $rs$ is in $J.$ So $s\in(r:J).$
\end{proof}

For the remainder of the paper, we drop the condition that the integer multiples of $1_R$ are invertible and we work with a most general unital ring. Recall that if $\alpha$ and $\beta$ are identities, the formula in Definition \ref{definition_of_alpha-beta} becomes $\delta_n(rs)=\sum_{i=0}^n \delta_{n-i}(r)\delta_{i}(s).$ So, the assumption on the invertibility of the integer multiples of $1_R$ is no longer needed. Note that the proof of Proposition \ref{alpha-beta-invariant} still holds in this case as well. Thus, as a direct corollary of Proposition \ref{alpha-beta-invariant}, we obtain the following.

\begin{cor} Any Gabriel filter is higher derivation invariant (i.e. every torsion theory is higher differential).
\label{any_filter_is_HD}
\end{cor}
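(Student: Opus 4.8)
The plan is to deduce Corollary \ref{any_filter_is_HD} directly from Proposition \ref{alpha-beta-invariant} by specializing to the case $\alpha=\beta=\mathrm{id}$. First I would recall, as the excerpt itself points out, that when $\alpha$ and $\beta$ are both the identity the defining formula of an $(\alpha,\beta)$-HD collapses to $\delta_n(rs)=\sum_{i=0}^n\delta_{n-i}(r)\delta_i(s)$, which is exactly the definition of an ordinary higher derivation; consequently the extra ring hypothesis (invertibility of the integer multiples of $1_R$) is not needed, and the proof of Proposition \ref{alpha-beta-invariant} goes through verbatim in this setting, since every appearance of a coefficient $\tfrac{i!(n-i)!}{n!}$ has disappeared and every application of $\alpha$- or $\beta$-invariance is trivially satisfied by $J=I$.

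Next I would observe that \emph{any} Gabriel filter $\ef$ is $\alpha$- and $\beta$-invariant when $\alpha=\beta=\mathrm{id}$: for each $I\in\ef$ one simply takes $J=I$, since $\mathrm{id}(I)=I\subseteq I$. Therefore the hypotheses of Proposition \ref{alpha-beta-invariant} are met by every Gabriel filter, and the conclusion of that proposition gives that $\ef$ is $\Delta$-invariant for every higher derivation $\Delta$. By the definition of \emph{higher differential} given just before the proposition, this says precisely that $\ef$ is higher differential, and hence the hereditary torsion theory it determines is higher differential, which is the assertion of the corollary.

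The only point requiring care — and the natural place to state explicitly rather than to compute — is that the induction in the proof of Proposition \ref{alpha-beta-invariant} remains valid without the invertibility hypothesis. This is immediate because that hypothesis was used solely to guarantee that the central coefficients $\tfrac{i!(n-i)!}{n!}$ exist and are central; with $\alpha=\beta=\mathrm{id}$ there are no such coefficients in the formula, so nothing is lost. I do not anticipate any genuine obstacle here: the corollary is a formal consequence of Proposition \ref{alpha-beta-invariant} once one notes that the identity automorphisms make its hypotheses automatic and its extra standing assumption superfluous.
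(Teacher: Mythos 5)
Your proposal is correct and follows essentially the same route as the paper: the paper likewise obtains the corollary as a direct specialization of Proposition \ref{alpha-beta-invariant} to $\alpha=\beta=\mathrm{id}$, noting that every Gabriel filter is trivially invariant under the identity and that the formula collapses to $\delta_n(rs)=\sum_{i=0}^n\delta_{n-i}(r)\delta_i(s)$, so the invertibility assumption on integer multiples of $1_R$ can be dropped and the induction goes through unchanged.
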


Let $\Delta=\{\delta_n\}_{n\in\omega}$ be a higher derivation on $R.$ If $\{d_n\}_{n\in\omega}$ is an indexed family of additive maps on a right $R$-module $M$ such that $d_0$ is the identity mapping on $M$ and $d_n(mr)=\sum_{i=0}^n d_i(m)\delta_{n-i}(r)$ for all $n,$ we say that $\{d_n\}$ is {\em higher $\Delta$-derivation} ($\Delta$-HD for short) on $M.$ If $D$ is such that every $d_n$ extends to the module of quotients $M_{\ef}$ of a Gabriel filter $\ef$ such that $d_n q_M=q_M d_n$ for all $n,$ then we say that $D$ {\em extends} to a $\Delta$-HD on $M_{\ef}.$

\begin{cor} Let $\tau$ be a hereditary torsion theory with filter $\ef$ and $\Delta$ be a HD on $R.$ Every $\Delta$-HD $D$ on any module $M$ extends uniquely to the module of quotients $M_{\ef}.$
\label{any_HD_extends}
\end{cor}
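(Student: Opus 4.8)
The plan is to combine Corollary~\ref{any_filter_is_HD} with the higher-derivation analogue of the Golan--Bland extension theorem, the latter proved by induction on $n$. By Corollary~\ref{any_filter_is_HD} the filter $\ef$ is $\Delta$-invariant; applying that corollary to the right $R$-module $R$ as well, fix an extension $\overline\Delta=\{\overline{\delta_n}\}$ of $\Delta$ to $R_\ef$. Throughout I would use three standard facts about $M_\ef$, all valid because $\tau$ is hereditary: (i) $M_\ef$ is torsion-free, being a submodule of $E(\f(M))\in\f$; (ii) $(q_M(M):x):=\{r\in R\mid xr\in q_M(M)\}\in\ef$ for every $x\in M_\ef$; (iii) every $R$-homomorphism $J\to q_M(M)$ with $J\in\ef$ is ``multiplication by'' a unique element of $M_\ef$.

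For the induction, $\overline{d_0}$ is the identity on $M_\ef$. Suppose $d_0,\dots,d_{n-1}$ have been extended to additive maps $\overline{d_0},\dots,\overline{d_{n-1}}$ on $M_\ef$ with $\overline{d_i}q_M=q_M d_i$; a short induction on $n$ gives $d_n(\te(M))\subseteq\te(M)$ from $\Delta$-invariance, so $q_M(d_n(m))$ depends only on $q_M(m)$. Given $x\in M_\ef$, pick $I\in\ef$ with $xI\subseteq q_M(M)$, and using $\Delta$-invariance together with fact (ii) applied to $\overline{d_0}(x),\dots,\overline{d_{n-1}}(x)$, shrink to a right ideal $J\in\ef$, $J\subseteq I$, with $\overline{d_i}(x)\,\delta_{n-i}(J)\subseteq q_M(M)$ for all $i<n$. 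For $r\in J$ write $xr=q_M(m_r)$. Any extension must satisfy $\overline{d_n}(xr)=\sum_{i=0}^n\overline{d_i}(x)\delta_{n-i}(r)$ and $\overline{d_n}(xr)=\overline{d_n}(q_M(m_r))=q_M(d_n(m_r))$, which forces
\[\overline{d_n}(x)\,r\;:=\;q_M\!\left(d_n(m_r)\right)-\sum_{i=0}^{n-1}\overline{d_i}(x)\,\delta_{n-i}(r);\]
this is the definition I would take. One checks the right-hand side depends only on $xr$, is additive in $r$, is $R$-linear in $r\in J$, and lies in $q_M(M)$ by the choice of $J$, so by (iii) it is ``multiplication by'' a genuine element $\overline{d_n}(x)\in M_\ef$, independent of all choices by (i). The resulting $\overline{d_n}$ are additive, satisfy $\overline{d_n}q_M=q_M d_n$ (take $m_r=mr$ for $x=q_M(m)$), and $\{\overline{d_n}\}$ is an $\overline\Delta$-HD on $M_\ef$ over $R_\ef$ (the identity over $R\subseteq R_\ef$ comes from the construction and then passes to $R_\ef$ by a density argument using (i)). Uniqueness: if $\{\overline{d_n}'\}$ is another extension, then $\overline{d_i}=\overline{d_i}'$ for $i<n$ by induction, and for $x\in M_\ef$ and $r$ in a suitable $J\in\ef$ the forced identity gives $(\overline{d_n}(x)-\overline{d_n}'(x))r=\overline{d_n}(xr)-\overline{d_n}'(xr)=0$, so the difference is torsion in the torsion-free module $M_\ef$, hence zero.

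I expect the main obstacle to be the $R$-linearity of $r\mapsto\overline{d_n}(x)r$ on $J$: this is where the higher-derivation combinatorics really enters. One expands $\overline{d_n}(x)(rs)$, substitutes $d_n(m_r s)=\sum_j d_j(m_r)\delta_{n-j}(s)$ and $\delta_{n-i}(rs)=\sum_k\delta_k(r)\delta_{n-i-k}(s)$, uses the inductive intertwining $q_M(d_j(m_r))=\overline{d_j}(xr)$, and checks everything reassembles into $(\overline{d_n}(x)r)s$. The remaining points (choosing $J$ inside $\ef$, additivity, the power-series-type bookkeeping) are routine. Alternatively, one can bypass the whole argument by quoting the higher-derivation extension theorem of \cite{Lia_HD}, which assumes only that $\ef$ is higher differential --- a hypothesis now supplied for free by Corollary~\ref{any_filter_is_HD}.
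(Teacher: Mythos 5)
Your argument is correct, but it does far more work than the paper, which disposes of the corollary in two sentences by citation: Bland's Proposition 4.2 of \cite{Bland_paperHD} states that a hereditary torsion theory is higher differential if and only if every $\Delta$-HD on every module extends uniquely to a $\Delta$-HD on its module of quotients, and Corollary \ref{any_filter_is_HD} supplies the hypothesis. That is exactly the ``alternative'' you relegate to your final sentence (the paper quotes \cite{Bland_paperHD} where you would quote \cite{Lia_HD}, but it is the same move). The body of your proposal is, in effect, a reconstruction of Bland's extension theorem itself: the inductive definition of $\overline{d_n}(x)$ on a dense right ideal chosen via $\Delta$-invariance, the verification that $r\mapsto q_M(d_n(m_r))-\sum_{i<n}\overline{d_i}(x)\delta_{n-i}(r)$ is an $R$-homomorphism $J\to q_M(M)$ (your identification of $R$-linearity as the crux, resolved by expanding $d_n(m_rs)$ and $\delta_{n-i}(rs)$ and reassembling, is exactly where the combinatorics sits and it does close up), and uniqueness via torsion-freeness of $M_{\ef}$. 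What your route buys is self-containedness --- the corollary no longer rests on an external extension theorem; what the paper's route buys is brevity and the reuse of a result already in the literature. Two small points to tidy if you keep the long version: Corollary \ref{any_filter_is_HD} gives $\Delta$-invariance of $\ef$, not an extension of $\Delta$ to $R_{\ef}$ --- the latter is the case $M=R$ of the very construction you are carrying out, so it must be done first or the $R_{\ef}$-module statement deferred to the end; and your inductive hypothesis must explicitly include the module rule $\overline{d_j}(xr)=\sum_{i\leq j}\overline{d_i}(x)\delta_{j-i}(r)$ for $j<n$, since the linearity computation uses it, not merely the intertwining $\overline{d_j}q_M=q_Md_j$.
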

\begin{proof}
Bland showed that a Gabriel filter is a HD filter iff for every $R$-module $M,$ every HD $\{d_n\}$ on $M,$ $d_i(\te(M))\subseteq\te(M)$ for all $i\leq n$ for all $n$ (Lemma 3.5 in \cite{Bland_paperHD}). Bland also showed that $\tau$ is higher differential iff every $\Delta$-HD $D$ on any module $M$ extends uniquely to a $\Delta$-HD on $M_{\ef}$ (Proposition 4.2, \cite{Bland_paperHD}). Since every hereditary torsion theory is higher differential by Corollary \ref{any_filter_is_HD}, the result follows.
\end{proof}

\section{Extending derivation to different modules of quotients}
\label{section_extending}

Let $\ef_1$ and $\ef_2$ be two filters such that $\ef_1\subseteq \ef_2.$ Let $M$ be a right $R$-module, $q_i$ the natural maps $M\rightarrow M_{\ef_i}$ for $i=1,2,$ and $q_{12}$ the map $M_{\ef_1}\rightarrow M_{\ef_2}$ induced by inclusion  $\ef_1\subseteq \ef_2.$ In this case, $q_{12}q_1=q_2.$ Let $d$ be a $\delta$-derivation on $M.$ If the diagram below commutes, we say that the extensions of $d$ on $M_{\ef_1}$ and $M_{\ef_2}$ {\em agree.}

\begin{diagram}
  &             & M_{\ef_1}    &      & \rTo^{d} &             & M_{\ef_1}\\
  & \ruTo^{q_1} & \vLine       &      &            & \ruTo^{q_1} &    \\
M &             & \rTo^{d}     &      & M          &             & \dTo_{q_{12}}\\
  & \rdTo^{q_2} & \dTo_{q_{12}}&      &            & \rdTo^{q_2} &     \\
  &             & M_{\ef_2}    &      & \rTo^{d} &             & M_{\ef_2}\\
\end{diagram}

Let $d$ be a derivation on $M$ that extends to $M_{\ef_1}.$ Conditions under which $d$ can be extended to $M_{\ef_2}$ so that the extension agree were studied in \cite{Lia_Extending}.

By Corollary 3 from \cite{Lomp_Berg}, any derivation $d$ on $M$ extends to both $M_{\ef_1}$ and $M_{\ef_2}.$ By Proposition 2 from \cite{Lia_Extending}, the two extensions agree then. This implies that, for any module $M,$ the extensions of a $\delta$-derivation $d$ to $M_{\ef_1}$ and $M_{\ef_2}$ agree. In particular, the extension of $d$ on module of quotients with respect Lambek or Goldie torsion theory agree with the extension of $d$ with respect to any other hereditary and faithful torsion theory.

In \cite{Lia_HD}, the concept of agreeing extensions is generalized to higher derivations. Let  $\ef_1$ and $\ef_2$ be two filters such that $\ef_1\subseteq \ef_2,$ $\Delta$ a HD on $R,$ $M$ be a right $R$-module, and $\{d_n\}$ a $\Delta$-HD defined on $M.$ If $\{d_n\}$ extends to $M_{\ef_1}$ and $M_{\ef_2}$ in such a way that the following diagram commutes for every $n,$ then we say that the extensions of $\{d_n\}$ on $M_{\ef_1}$ and $M_{\ef_2}$ {\em agree.}
\begin{diagram}
  &             & M_{\ef_1}    &      & \rTo^{d_n} &             & M_{\ef_1}\\
  & \ruTo^{q_1} & \vLine       &      &            & \ruTo^{q_1} &    \\
M &             & \rTo^{d_n}     &      & M          &             & \dTo_{q_{12}}\\
  & \rdTo^{q_2} & \dTo_{q_{12}}&      &            & \rdTo^{q_2} &     \\
  &             & M_{\ef_2}    &      & \rTo^{d_n} &             & M_{\ef_2}\\
\end{diagram}

\begin{cor}
If $\ef_1$ and $\ef_2$ are two filters such that $\ef_1\subseteq \ef_2,$ and $\Delta$ a HD on $R,$ then for any module $M$ the extension of a $\Delta$-HD $D$ to $M_{\ef_1}$ agrees with the extension of $D$ to $M_{\ef_2}.$ In particular, the extension of $D$ on module of quotients with respect Lambek or Goldie torsion theory agree with the extension of $D$ with respect to any other hereditary and faithful torsion theory.
\label{agreement_HD}
\end{cor}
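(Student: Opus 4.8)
The plan is to deduce Corollary \ref{agreement_HD} from the uniqueness half of Corollary \ref{any_HD_extends} together with the iteration of localization, exactly as the derivation case in this section is handled via Corollary 3 of \cite{Lomp_Berg} and Proposition 2 of \cite{Lia_Extending}. First I would record what Corollary \ref{any_HD_extends} already supplies: the $\Delta$-HD $D=\{d_n\}$ extends uniquely to a $\Delta$-HD $\{d_n^{(1)}\}$ on $M_{\ef_1}$ with $d_n^{(1)}q_1=q_1d_n$ for all $n$, and uniquely to a $\Delta$-HD $\{d_n^{(2)}\}$ on $M_{\ef_2}$ with $d_n^{(2)}q_2=q_2d_n$ for all $n$; applied to $M=R$ it likewise yields the extensions $\Delta_1$ of $\Delta$ to $R_{\ef_1}$ and $\Delta_2$ of $\Delta$ to $R_{\ef_2}$, and $\{d_n^{(1)}\}$ is in fact a $\Delta_1$-HD on the $R_{\ef_1}$-module $M_{\ef_1}$. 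What remains to show is $q_{12}d_n^{(1)}=d_n^{(2)}q_{12}$ for every $n$, which is precisely the commutativity asserted by the two diagrams.

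For the core of the argument I would invoke the standard fact that, because $\ef_1\subseteq\ef_2$, the module $M_{\ef_2}$ is canonically the $\ef_2$-module of quotients of $M_{\ef_1}$ and $R_{\ef_2}$ is the $\ef_2$-ring of quotients of $R_{\ef_1}$, with $q_{12}$ the associated natural map (cf. the exposition in \cite{Stenstrom}); by uniqueness, the extension of $\Delta_1$ to $R_{\ef_2}$ obtained this way is $\Delta_2$. Applying Corollary \ref{any_HD_extends} once more — to the $\Delta_1$-HD $\{d_n^{(1)}\}$ on $M_{\ef_1}$ and the filter $\ef_2$ — produces a $\Delta_2$-HD $\{\widetilde d_n\}$ on $M_{\ef_2}$ with $\widetilde d_nq_{12}=q_{12}d_n^{(1)}$. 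Using $q_{12}q_1=q_2$ I then compute $\widetilde d_nq_2=\widetilde d_nq_{12}q_1=q_{12}d_n^{(1)}q_1=q_{12}q_1d_n=q_2d_n$, so $\{\widetilde d_n\}$ is an extension of $D$ to $M_{\ef_2}$ compatible with $q_2$; by the uniqueness in Corollary \ref{any_HD_extends}, $\widetilde d_n=d_n^{(2)}$ for all $n$, whence $q_{12}d_n^{(1)}=\widetilde d_nq_{12}=d_n^{(2)}q_{12}$. Equivalently, one may simply cite the agreement criterion for higher derivations from \cite{Lia_HD} and observe that, by Corollary \ref{any_HD_extends}, its hypothesis that $D$ extend to each $M_{\ef_i}$ now holds for arbitrary nested filters, so its conclusion applies unconditionally. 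The "in particular" clause then follows by taking $\ef_1$ to be the Lambek or Goldie filter — contained in the filter of any hereditary faithful torsion theory — and $\ef_2$ the latter filter.

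The main obstacle I anticipate is not a computation but the bookkeeping in the middle step: justifying the identifications $M_{\ef_2}\cong(M_{\ef_1})_{\ef_2}$ and $R_{\ef_2}\cong(R_{\ef_1})_{\ef_2}$ compatibly with $q_{12}$, and checking that the higher derivations $\Delta$, $\Delta_1$, $\Delta_2$ and their module-level counterparts $D$, $\{d_n^{(1)}\}$, $\{d_n^{(2)}\}$ all correspond under these identifications — each of which reduces, again, to the uniqueness in Corollary \ref{any_HD_extends}. Once that is set up, the rest is the short diagram chase above, and no fresh manipulation of the $(\alpha,\beta)$-HD formula is required.
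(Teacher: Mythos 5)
Your proposal is correct, and its closing ``equivalently'' sentence is in fact the paper's entire proof: the paper simply cites Proposition 3 of \cite{Lia_HD} (if a $\Delta$-HD extends to both $M_{\ef_1}$ and $M_{\ef_2}$, the extensions agree) and then notes that Corollary \ref{any_HD_extends} supplies the existence hypothesis unconditionally. Your primary argument, however, takes a genuinely different and more self-contained route: instead of citing the agreement criterion, you re-derive it by identifying $M_{\ef_2}$ with $(M_{\ef_1})_{\ef_2}$ and $q_{12}$ with the corresponding localization map, applying Corollary \ref{any_HD_extends} a second time to the extended family $\{d_n^{(1)}\}$ on $M_{\ef_1}$, and then using the uniqueness clause to match the resulting $\{\widetilde d_n\}$ with $\{d_n^{(2)}\}$ via the computation $\widetilde d_n q_2 = q_2 d_n$. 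This costs you the bookkeeping you correctly flag (the canonical isomorphism $(M_{\ef_1})_{\ef_2}\cong M_{\ef_2}$ compatible with $q_{12}$, which holds because $q_1$ has $\tau_1$-torsion, hence $\tau_2$-torsion, kernel and cokernel), but it buys independence from Proposition 3 of \cite{Lia_HD}; the paper's version is shorter but rests entirely on that external result. Both arguments are sound, and both correctly reduce the ``in particular'' clause to the containment of the Lambek or Goldie filter in the filter of any hereditary faithful torsion theory.
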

\begin{proof}
By Proposition 3 from \cite{Lia_HD}, if a $\Delta$-HD $D$ extends to $M_{\ef_1}$ and $M_{\ef_2},$ then the extensions agree. By Corollary \ref{any_HD_extends}, $D$ always extends to both $M_{\ef_1}$ and $M_{\ef_2}$ and so the result follows.
\end{proof}

\section{Symmetric modules of quotients}
\label{section_symmetric}

In \cite{Lia_Extending}, the concept of invariant filters is extended to symmetric filters as well. A symmetric filter $_l\ef_r$ induced by a left filter $\ef_l$ and a right filter $\ef_r$ can be defined so that the hereditary torsion theory $_l\tau_r$ on $R$-bimodules that correspond to $_l\ef_r$ has the torsion class equal to the intersection of torsion classes of $\tau_l$ and $\tau_r:$
\[_l\te_r=\,\te_l\cap\te_r.\]

In \cite{Ortega_paper}, the {\em symmetric module of quotients} $_{\ef_l}M_{\ef_r}$ of $M$ with respect to $_l\ef_r$ is defined to be
\[_{\ef_l}M_{\ef_r}=\dirlim_{K\in _l\ef_r}\;  \homo(K, \frac{M}{_l\te_r(M)})\]
where the homomorphisms in the formula are $R\otimes R^{op}$ homomorphisms (equivalently $R$-bimodule homomorphisms).
We shorten the notation $_{\ef_l}M_{\ef_r}$ to $_lM_r.$ Just as in the right-sided case, there is a left exact functor $q_M$ mapping $M$ to the symmetric module of quotients $_lM_r$ such that $\ker q_M$ is the torsion module $_l\te_r(M).$

Every derivation $\delta$ on $R$ determines a derivation on $R\otimes_{\Zset}R^{op}$ given by $\overline{\delta}(r\otimes s)=\delta(r)\otimes s+r\otimes \delta(s).$ Similarly, every HD $\Delta$ on $R$ determines a HD $\overline{\Delta}$ on $R\otimes_{\Zset}R^{op}$ given by \[\overline{\delta_n}(r\otimes s)=\sum_{i=0}^n \delta_i(r)\otimes \delta_{n-i}(s).\]

If $M$ is an $R$-bimodule, and $\delta$ a derivation on $R$, we say that an additive map $d: M\rightarrow M$ is a {\em $\delta$-derivation} if \[d(xr)=d(x)r+x\delta(r)\mbox{ and }d(rx)=\delta(r)x+rd(x)\]
for all $x\in M$ and $r\in R.$ Note that $d$ is a $\overline{\delta}$-derivation on $M$ considered as a right $R\otimes_{\Zset}R^{op}$-module. Conversely, every $\overline{\delta}$-derivation of a right $R\otimes_{\Zset}R^{op}$-module determines a $\delta$-derivation of the corresponding bimodule.
Thus, every derivation $\delta$ on $R$ is a $\overline{\delta}$-derivation on $R$ considered as a right $R\otimes_{\Zset}R^{op}$-module. Conversely, every derivation $\overline{\delta}$ on $R\otimes_{\Zset}R^{op}$ is a $\delta$-derivation of $R\otimes_{\Zset}R^{op}$ considered as an $R$-bimodule.

This generalizes to higher derivations as well.
If $M$ is an $R$-bimodule, and $\Delta$ a HD on $R$, we say that an indexed family of additive maps $\{d_n\}$ defined on $M$ is a {\em $\Delta$-HD} if $d_0$ is an identity,
\[d_n(xr)=\sum_{i=0}^n\delta_i(x)\delta_{n-i}(r)\mbox{ and }d_n(rx)=\sum_{i=0}^n\delta_i(r)\delta_{n-i}(x)\]
for all $x\in M$ and $r\in R.$ It is straightforward to check that $\{d_n\}$ is a $\overline{\Delta}$-HD on $M$ considered as a right $R\otimes_{\Zset}R^{op}$-module. Conversely, every $\overline{\Delta}$-HD on a right $R\otimes_{\Zset}R^{op}$-module $M$ determines a $\Delta$-HD on $M$ considered as an $R$-bimodule. Specifically, every HD $\Delta$ on $R$ is a $\overline{\Delta}$-HD on $R$ considered as a right $R\otimes_{\Zset}R^{op}$-module. Conversely, every HD $\overline{\Delta}$ on $R\otimes_{\Zset}R^{op}$ is a $\Delta$-HD on $R\otimes_{\Zset}R^{op}$ considered as an $R$-bimodule.

A symmetric filter $_l\ef_r$ induced by a left Gabriel filter $\ef_l$ and a right Gabriel filter $\ef_r$ is said to be $\delta$-invariant if for every $I\in\, _l\ef_r,$ there is $J\in\, _l\ef_r$ such that $\overline{\delta}(J)\subseteq I.$
If we consider the right $R\otimes_{\Zset}R^{op}$-ideals $I$ and $J$ as $R$-bimodules, the condition $\overline{\delta}(J)\subseteq I$ is equivalent with $\delta(J)\subseteq I$ by the above observations. This definition generalizes to higher derivations on a straightforward way just as in one-sided case.

We say that $_l\ef_r$ is a {\em differential filter} if it is $\delta$-invariant for all derivations $\delta.$  The hereditary torsion theory determined by $_l\ef_r$ is said to be {\em differential} in this case. Similarly, a HD symmetric filter is defined.

The following proposition proves Corollaries \ref{any_filter_is_HD}, \ref{any_HD_extends} and \ref{agreement_HD} for symmetric filters.

\begin{cor}
\begin{enumerate}
\item Any symmetric Gabriel filter is higher derivation invariant (i.e. every symmetric torsion theory is higher differential).

\item Let $_l\tau_r$ be a symmetric hereditary torsion theory with filter $_l\ef_r$ and $\Delta$ be a HD on $R.$ Every $\Delta$-HD $D$ on any module $M$ extends uniquely to the module of quotients $_lM_r.$

\item  If $_l\ef_r^1$ and $_l\ef_r^2$ are two symmetric filters such that $_l\ef_r^1\subseteq\, _l\ef_r^2,$ and $\Delta$ is a HD on $R,$ then for any bimodule $M,$ the extensions of any $\Delta$-HD $D$ to $_lM_r^1$ and $_lM_r^2$ agree.
\end{enumerate}
\label{any_symmetric_filter_is_HD}
\end{cor}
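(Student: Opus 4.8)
The plan is to reduce all three parts to the corresponding one-sided statements—Corollaries \ref{any_filter_is_HD}, \ref{any_HD_extends}, and \ref{agreement_HD}—applied over the ring $S=R\otimes_{\Zset}R^{op}$, using the dictionary assembled above in this section. The relevant pieces of that dictionary are: the category of $R$-bimodules is (isomorphic to) the category of right $S$-modules; a symmetric Gabriel filter $_l\ef_r$ is itself a Gabriel filter on $S$, whose hereditary torsion theory has torsion functor $_l\te_r=\te_l\cap\te_r$; each HD $\Delta$ on $R$ yields the HD $\overline{\Delta}$ on $S$; a $\Delta$-HD $\{d_n\}$ on an $R$-bimodule $M$ is the same datum as a $\overline{\Delta}$-HD on $M$ regarded as a right $S$-module; and $_l\ef_r$ is $\Delta$-invariant exactly when it is $\overline{\Delta}$-invariant as a filter on $S$.

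The one remaining identification is that the symmetric module of quotients $_lM_r$ equals the right $S$-module of quotients of $M$ with respect to $_l\ef_r$, compatibly with the localization maps. This holds because the defining formula $_lM_r=\dirlim_{K\in {}_l\ef_r}\homo(K,M/{}_l\te_r(M))$ from \cite{Ortega_paper}, with morphisms taken among right $S$-modules, is exactly the standard description of the module of quotients of the right $S$-module $M$ at the Gabriel filter $_l\ef_r$ (\cite{Stenstrom}, pp.~197--199), once one notes that $_l\te_r(M)$ is the torsion submodule of $M$ for the torsion theory of $_l\ef_r$ on $S$ and that injective envelopes are computed in the single ambient category of $R$-bimodules, equivalently right $S$-modules. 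In particular the map $q_M\colon M\to{}_lM_r$ with $\ker q_M={}_l\te_r(M)$ coincides with the one-sided localization map, and for $_l\ef_r^1\subseteq{}_l\ef_r^2$ the transition map $q_{12}$ between the symmetric modules of quotients is the one induced by that inclusion on $S$-modules.

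Granting this identification, the three parts follow at once. For (1), $_l\ef_r$ is a Gabriel filter on $S$, hence by Corollary \ref{any_filter_is_HD} it is invariant for every HD on $S$, in particular for each $\overline{\Delta}$; so $_l\ef_r$ is $\Delta$-invariant for every HD $\Delta$ on $R$, i.e.\ higher differential. For (2), a $\Delta$-HD $D=\{d_n\}$ on a bimodule $M$ is a $\overline{\Delta}$-HD on the right $S$-module $M$, so Corollary \ref{any_HD_extends} furnishes a unique extension to the $S$-module of quotients of $M$, i.e.\ to $_lM_r$, with $d_nq_M=q_Md_n$; read back as bimodule maps this is the desired unique $\Delta$-HD on $_lM_r$. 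For (3), $_l\ef_r^1\subseteq{}_l\ef_r^2$ as Gabriel filters on $S$, and Corollary \ref{agreement_HD} applied over $S$ to the $\overline{\Delta}$-HD $D$ says its extensions to the two $S$-modules of quotients agree; these modules together with the maps $q_1,q_2,q_{12}$ are precisely $_lM_r^1,{}_lM_r^2$ and the symmetric transition map, so the two extensions of $D$ agree.

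The only step needing genuine care is the identification of $_lM_r$ with the one-sided module of quotients over $S$ together with the matching of the maps $q_M$ and $q_{12}$; everything else is formal transport along the equivalence between $R$-bimodules and right $S$-modules. This identification is essentially implicit in \cite{Ortega_paper} and in the treatment of symmetric filters in \cite{Lia_Extending}, so I expect it to be routine and the corollary to follow without substantial extra work.
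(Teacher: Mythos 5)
Your strategy---transporting all three parts along the identification of $R$-bimodules with right $S$-modules, $S=R\otimes_{\Zset}R^{op}$---is not the paper's route, and the step you yourself flag as ``the only step needing genuine care'' is a genuine gap, not a routine verification. A symmetric filter ${}_l\ef_r$ is not literally a Gabriel filter on $S$: its members are sub-bimodules of $R$, i.e.\ right $S$-submodules of the cyclic $S$-module $R\cong S/\ker\mu$ (with $\mu$ the multiplication map), not right ideals of $S$. Consequently Ortega's direct limit $\dirlim_{K\in{}_l\ef_r}\homo(K,M/{}_l\te_r(M))$, indexed by those sub-bimodules $K$ of $R$, is not ``exactly the standard description'' of a module of quotients over $S$, which is indexed by the right ideals $I$ of $S$ with $S/I\in{}_l\te_r$. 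Passing to preimages $\mu^{-1}(K)$ does not repair this: $\homo_S(\mu^{-1}(K),N)$ and $\homo_S(K,N)$ differ by the homomorphisms that do not vanish on $\ker\mu$, so the two direct limits need not coincide, and the symmetric module of quotients need not be the Gabriel localization of $M$ in the category of right $S$-modules. The same mismatch already affects part (1): $\Delta$-invariance of ${}_l\ef_r$ is a statement about the two-sided ideals $I,J\in{}_l\ef_r$ of $R$, which Corollary \ref{any_filter_is_HD} applied over $S$ does not directly address. Without the identification, none of your three deductions goes through as stated.

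The paper sidesteps all of this by quoting results proved directly for the symmetric localization: Proposition 3 of \cite{Lia_Extending} and Proposition 4 of \cite{Lia_HD} show that ${}_l\ef_r$ is differential, respectively HD, whenever the one-sided filters $\ef_l$ and $\ef_r$ are, and Propositions 4(iii) and 5 of \cite{Lia_HD} give the extension and agreement statements under the hypothesis that the symmetric filters involved are HD; combining these with Corollary \ref{any_filter_is_HD} yields all three parts. To salvage your argument you would have to actually prove the comparison between the symmetric module of quotients and the Mod-$S$ localization (together with the matching of $q_M$ and $q_{12}$), which is precisely the content your proof defers; the shorter path is to invoke the cited propositions as the paper does.
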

\begin{proof}
1.  By Proposition 3 of \cite{Lia_Extending}, if $\ef_l$ and $\ef_r$ are differential, then $_l\ef_r$ is also differential. By Corollary 3 of \cite{Lomp_Berg}, $\ef_l$ and $\ef_r$ are always differential and so we obtain that every symmetric filter is differential as well. In \cite{Lia_HD}, the results on symmetric filters from \cite{Lia_Extending} are generalized to higher derivations. In particular, it is shown that if $\ef_l$ and $\ef_r$ are HD, that the symmetric filter $_l\ef_r$ is HD as well (see Proposition 4 of \cite{Lia_HD}). This, together with Corollary \ref{any_filter_is_HD}, gives us part 1.

2. Part iii) of Proposition 4 in \cite{Lia_HD} states that part 2. holds provided that the filter $_l\ef_r$ is HD. However, any filter is HD by part 1., so the result follows.

3. Proposition 5 in \cite{Lia_HD} states that part 3. holds provided that $_l\ef_r^1$ and $_l\ef_r^2$ are HD. Since these conditions are always fulfilled by part 1., the result follows.
\end{proof}

\section{Torsion theory that is not differential}
\label{section_example}

Gabriel filter, right ring and modules of quotients and related concepts are defined just for a torsion theory that is hereditary. Thus, if we want to generalize the concept of differential torsion theory to torsion theories that are not necessarily hereditary, we cannot use the second and third of the following three equivalent conditions for differentiability of a hereditary torsion theory.
\begin{enumerate}
\item For every derivation $\delta,$ and every right $R$-module $M$ with a $\delta$-derivation $d,$ $d(\te M)\subseteq \te M;$

\item Gabriel filter $\ef$ is $\delta$-invariant (i.e. for every $I\in \ef$ there is $J\in \ef$ with $\delta(J)\subseteq I$) for every derivation $\delta$;

\item For every derivation $\delta,$ every $\delta$-derivation on any module uniquely extends to the module of quotients.
\end{enumerate}
Note that just the first condition is meaningful even if a torsion theory is not hereditary. So, let us introduce the following definition.

\begin{defn}
Let $\tau$ be a (not necessarily hereditary) torsion theory. Then $\tau$ is {\em differential} if $d(\te M)\subseteq \te M$ for any ring derivation $\delta$ and any right $R$-module $M$ with a $\delta$-derivation $d.$
\end{defn}

The following example shows that not every torsion theory is differential.

\begin{ex}
Let $R=\Zset[x]$ and $I=(x).$ Consider the module $R/I\cong \Zset.$ Consider the class of right $R$-modules  $\te=\{M|\ker(M\rightarrow M\otimes_R \Zset)=M\}.$ Note that this class is closed under quotients, extensions and direct sums, so it defines a torsion class of a torsion theory $\tau.$ In this torsion theory $\te M=\ker(M\rightarrow M\otimes_R \Zset)$ for every module $M.$

Note that $\ker(R\rightarrow R\otimes_R \Zset)= I$ so $\te R=I.$ $\ker(I\rightarrow I\otimes_R\Zset)=I^2.$ This shows that
$I^2=\te I\neq I\cap \te R=I,$ so the torsion theory is not hereditary. Note also that if $\Zset$ were flat as a left $R$-module, then this torsion theory would necessarily have been hereditary.

Now let us consider the map $\delta: R\rightarrow R$ given by $\delta=\frac{d}{dx}$ i.e.
\[\delta(a_nx^n+a_{n-1}x^{n-1}+\ldots +a_1x+a_0)=n a_nx^{n-1}+(n-1)a_{n-1}x^{n-2}+\ldots +a_1.\]  It is easy to see that this is a derivation on $R.$ As $\delta(x)=1,$ we have that $x\in \te R$ and $\delta(x)\notin \te R.$
\qed
\label{example_not_diff}
\end{ex}

This answers the first of the three questions from section 6 of \cite{Lia_HD}: there is a non-hereditary and non-differential torsion theory.

The second question from \cite{Lia_HD} is asking if an extension of a derivation to module of quotients with respect to larger torsion theory can be restricted to extension of a derivation with respect to a smaller torsion theory. The affirmative answer emerged with Corollary 3 of \cite{Lomp_Berg} and Proposition 2 of \cite{Lia_Extending}. Namely, by Corollary 3 of \cite{Lomp_Berg}, every torsion theory is differential. By Proposition 2 of \cite{Lia_Extending}, this implies that all extensions of derivations to module of quotients agree. Moreover, by results of this paper, this result holds for higher derivations as well.

Finally, the third question from \cite{Lia_HD} is asking if there is a differential hereditary torsion theory that is not higher differential. By Corollary \ref{any_filter_is_HD} any hereditary torsion theory is higher differential so the answer to this question is ``no''.

\end{document}